\begin{document}
\newtheorem{theorem}{Theorem}[section]
\newtheorem{lemma}[theorem]{Lemma}
\newtheorem{proposition}[theorem]{Proposition}
\newtheorem{definition}[theorem]{Definition}
\def\emptyset{\varnothing}
\def\setminus{\smallsetminus}
\def\loc{{\mathrm{loc}}}
\def\Irr{{\mathrm{Irr}}}
\def\Rep{{\mathrm{Rep}}}
\def\End{{\mathrm{End}}}
\def\Vec{{\mathrm{Vec}}}
\def\opp{{\mathrm{opp}}}
\def\id{{\mathrm{id}}}
\def\A{{\mathcal{A}}}
\def\B{{\mathcal{B}}}
\def\C{{\mathcal{C}}}
\def\D{{\mathcal{D}}}
\def\N{{\mathbb{N}}}
\def\Q{{\mathbb{Q}}}
\def\R{{\mathbb{R}}}
\def\Z{{\mathbb{Z}}}
\def\a{{\alpha}}
\def\e{{\varepsilon}}
\def\la{{\lambda}}
\def\th{{\theta}}
\def\isom{{\cong}}
\newcommand{\Hom}{\mathop{\mathrm{Hom}}\nolimits}
\def\qed{{\unskip\nobreak\hfil\penalty50
\hskip2em\hbox{}\nobreak\hfil$\square$
\parfillskip=0pt \finalhyphendemerits=0\par}\medskip}
\def\proof{\trivlist \item[\hskip \labelsep{\bf Proof.\ }]}
\def\endproof{\null\hfill\qed\endtrivlist\noindent}

%%%%%%%%%%%%%%%%%%%%%%%%%%%%%%
\title{A relative tensor product of subfactors\\
over a modular tensor category\footnote{Keywords: conformal
field theory, modular tensor category, modular invariant,
subfactor; MSC: 81T40, 46L37, 18D10}}
\author{
{\sc Yasuyuki Kawahigashi}\footnote{Supported in part by 
Research Grants and the Grants-in-Aid
for Scientific Research, JSPS.}\\
{\small Graduate School of Mathematical Sciences}\\
{\small The University of Tokyo, Komaba, Tokyo, 153-8914, Japan}
\\[0,05cm]
{\small and}
\\[0,05cm]
{\small Kavli IPMU (WPI), the University of Tokyo}\\
{\small 5-1-5 Kashiwanoha, Kashiwa, 277-8583, Japan}\\
{\small e-mail: {\tt yasuyuki@ms.u-tokyo.ac.jp}}}
\maketitle{}
\begin{abstract}
We define and study a certain relative tensor product of subfactors
over a modular tensor category.  This gives a relative tensor
product of two completely rational heterotic full local conformal nets
with trivial superselection structures over a common chiral
representation category.  
In particular, we have a new realization of fusion rules of modular
invariants.  This also gives a mathematical
definition of a composition of two gapped domain walls between
topological phases.  
\end{abstract}

\section{Introduction}

The theory of subfactors due to Jones \cite{J} has been a very powerful
tool in conformal field theory.  We study some aspects of full conformal
field theory from a viewpoint of subfactors and modular tensor
categories.  (We consider only unitary modular tensor categories
in this paper.)

We are interested in a subfactor $N\subset M$ with finite
Jones index $[M:N]$.  In conformal field theory, it is often useful
to formulate a subfactor $N\subset M$
in terms of a $Q$-system $\Theta=(\theta,w,x)$
where $\theta$ is an
endomorphism of a type III factor $N$ with separable predual
and $w\in\Hom(\id,\theta)$, $x\in\Hom(\theta,\theta^2)$
as in \cite{L}.  When $\theta$ is an object of an abstract
modular tensor category $\C$, we say
$\Theta$ is a $Q$-system on $\C$.  (Note that any modular
tensor category is realized as a subcategory of $\End(N)$ for
a type III factor $N$.)  It is also often called
a $C^*$-Frobenius algebra on $\C$.  When we have 
$x=\e(\th,\th)x$, where $\e$ denotes the braiding,
we say that the $Q$-system $\Theta$ is
local.  It is also often said that it is commutative.
We say $\Theta$ is Lagrangian if we have
$(\dim\th)^2=\dim\C$.  (See \cite[page 153]{DMNO} for the
origin of this terminology.) See \cite{K1} and references therein
for more on subfactors and tensor categories.  Our
basic reference on modular categories is \cite{BK}.
See \cite{EK} for basics of subfactor theory.

Let $\{A(I)\}$ be a completely rational local conformal net
in the sense of \cite{KLM}, \cite{KL1}, and let $\C$ be
the Doplicher-Haag-Roberts representation category of
$\{A(I)\}$.  (It is a modular tensor category by \cite{KLM}.)
A maximal full conformal field theory in the sense of \cite{KL2}
is given by a local Lagrangian $Q$-system on $\C\boxtimes\C^\opp$
as in \cite{KL2}, where ``opp'' means the opposite modular
tensor category for which the braiding is reversed.
(Also see \cite[Proposition 6.7]{BKL}.)  Let 
$\th=\bigoplus_{\la\in{\Irr}(\C),
\mu\in{\Irr}(\C^\opp)}
Z_{\la\mu}\la\boxtimes\bar\mu$ be the object of such a
$Q$-system on $\C\boxtimes\C^\opp$, where ``Irr'' means the
set of equivalence classes of simple objects in the
modular tensor category.  The matrix $Z=(Z_{\la\mu})$ is
then a modular invariant in the sense that it commutes with
the $S$- and $T$-matrices arising from $\C$ as in 
\cite[Proposition 6.6]{BKL}.  Suppose we have two such
modular invariants $(Z^1_{\la\mu})$ and $(Z^2_{\mu\nu})$.
Then the matrix product $Z^1Z^2$ clearly satisfies the
properties of the modular invariant except for the 
normalization condition $Z_{00}=1$ where $0$ denotes the
identity object of the modular tensor category $\C$.
It is sometimes possible to have a decomposition
$Z^1 Z^2=\sum_i Z^{3,i}$ into modular invariants $Z^{3,i}$.
Such decomposition rules of matrix products have been studied
under the name of fusion rules of modular invariants
in \cite{E}, \cite[Section 3.1]{EP},
\cite[Remark 5.4 (iii)]{FRS}.
We have a machinery of $\a$-induction for subfactors
as in \cite{LR}, 
\cite{BE1}, 
\cite{BEK1}, \cite{BEK2},\cite{BEK3}, and it
produces a modular invariant as in \cite{BEK1}.
It gives a $Q$-system on $\C\boxtimes\C^\opp$
as in \cite{R1}, and
this is a general form of a maximal full conformal
field theory on $\C\boxtimes\C^\opp$
as in \cite[Proposition 6.7]{BKL}.
The results in \cite[Section 3.1]{EP},
\cite[Remark 5.4 (iii)]{FRS} say that a braided product
of $Q$-systems on $\C$ gives a fusion rule of
the corresponding $Q$-systems on $\C\boxtimes\C^\opp$.
In this way, we indirectly have an irreducible
decomposition of a certain relative tensor product
of two local irreducible Lagrangian $Q$-systems
on $\C\boxtimes\C^\opp$.

One typical example of such fusion rules is given as
follows.  Let $\C$ be the modular tensor category
corresponding to the WZW-model $SU(2)_{17}$.
Then by \cite[Page 202]{O} (and also by
\cite[Theorem 2.1]{KLPR} and 
\cite[Proposition 6.7]{BKL}), we have exactly three
irreducible local Lagrangian $Q$-systems on
$\C\boxtimes \C^\opp$ and they are labeled with
$A_{17},D_{10},E_7$ as in \cite{CIZ}.  (These
labels are for the modular invariant matrices.
The label $A_{17}$ corresponds to the identity 
matrix.)  Their nontrivial fusion rules are 
as follows by \cite[Section 5.1]{E},
\cite[Remark 5.4 (iii)]{FRS}.

\begin{align*}
D_{10}\otimes D_{10}&=2D_{10},\\
D_{10}\otimes E_7 &= E_7\otimes D_{10}=2E_7,\\
E_7\otimes E_7 &=D_{10}\oplus E_7.
\end{align*}

We would like to extend this relative product to the
irreducible local Lagrangian $Q$-systems on
$\C_1\boxtimes\C_2^\opp$ and 
$\C_2\boxtimes\C_3^\opp$ in this paper where 
$\C_1,\C_2,\C_3$ can be different.  This setting 
corresponds to a heterotic full conformal field theory.

The author thanks M. Bischoff, L. Kong, R. Longo,
K.-H. Rehren and Z. Wang
for useful discussions and comments.
Parts of this work were done at 
Instituto Superior T\'ecnico, Universidade de Lisboa
and Microsoft Research Station Q at Santa Barbara.
The author thanks the both institutions for
their hospitality.

\section{A relative tensor product of $Q$-systems}

We consider a $Q$-system $\Theta=(\theta,w,x)$ where $\theta$ is an
endomorphism of a type III factor $N$ with separable predual
and $w\in\Hom(\id,\theta)$, $x\in\Hom(\theta,\theta^2)$.
We adapt \cite[Definition 3.8]{BKLR}, which means that such
a $Q$-system corresponds to an inclusion $N\subset M$ where
$M$ may not be a factor.  We have $N'\cap M={\mathbb C}$ if
and only if the $Q$-system $\Theta$ is irreducible.

We recall the following proposition in \cite{M}.
(Also see \cite[Proposition 3.7, Corollary 3.8]{DNO}.)

\begin{proposition}
\label{modular}
Let $\Theta=(\theta, w, x)$ be an irreducible local
$Q$-system where
$\theta$ is of the form $\bigoplus_{\la\in{\Irr}(\C_1),
\mu\in{\Irr}(\C_2^\opp)}
Z_{\la\mu}\la\boxtimes\bar\mu$ for some modular
tensor categories $\C_1,\C_2$.  Then it is Lagrangian 
if and only if we have the modular invariance property
$S_{\C_1} Z=Z S_{\C_2}$
and $T_{\C_1} Z=Z T_{\C_2}$ for the matrix $Z=(Z_{\la\mu})$, 
where $S_{\C_1}, S_{\C_2}, T_{\C_1}, T_{\C_2}$ are the 
$S$-matrix for $\C_1$, $S$-matrix for $\C_2$, 
$T$-matrix for $\C_1$ and
$T$-matrix for $\C_2$, respectively.
\end{proposition}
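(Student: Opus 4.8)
The plan is to split the equivalence into its two implications, handling the $S$- and $T$-conditions by different means, and to exploit throughout that the ambient category $\B:=\C_1\boxtimes\C_2^\opp$ is modular with factorized modular data $(S_{\C_1}\otimes\overline{S_{\C_2}},\,T_{\C_1}\otimes\overline{T_{\C_2}})$ and global dimension $\dim\B=\dim\C_1\dim\C_2$. I write $\omega_\nu$ for the twist of a simple object $\nu$, so that (in the convention of \cite{BK}) $T_{\C_i}=\mathrm{diag}(\omega_\nu)$, and I use the standard identities $S_{\C_i}=S_{\C_i}^{t}$, $S_{\C_i}^2=C$ (charge conjugation) and $(S_{\C_i})_{0\nu}=d_\nu/\sqrt{\dim\C_i}$, with $d_\nu$ the dimension of $\nu$. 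Irreducibility of $\Theta$ gives $Z_{00}=\dim\Hom(0\boxtimes\bar0,\theta)=1$.

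For the implication ``modular invariance $\Rightarrow$ Lagrangian'' I would argue purely computationally, and in fact only the $S$-equation is needed. Writing $\dim\theta=\sum_{\la,\mu}Z_{\la\mu}d_\la d_\mu$ and substituting $d_\la=\sqrt{\dim\C_1}\,(S_{\C_1})_{0\la}$ and $d_\mu=\sqrt{\dim\C_2}\,(S_{\C_2})_{\mu0}$, one obtains
\[
\dim\theta=\sqrt{\dim\B}\,(S_{\C_1}\,Z\,S_{\C_2})_{00}.
\]
Now $S_{\C_1}Z=ZS_{\C_2}$ turns the right-hand side into $\sqrt{\dim\B}\,(Z\,S_{\C_2}^2)_{00}=\sqrt{\dim\B}\,(ZC)_{00}=\sqrt{\dim\B}\,Z_{00}=\sqrt{\dim\B}$, whence $(\dim\theta)^2=\dim\B$ and $\Theta$ is Lagrangian. (The trivial $Q$-system $\theta=0\boxtimes\bar0$, which is local with $Z=E_{00}$ but fails $S_{\C_1}Z=ZS_{\C_2}$, shows that this collapse genuinely requires the $S$-intertwiner, so the $T$-equation is not needed for this direction.)

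For the converse I would establish the two intertwining relations separately. The $T$-relation is a direct consequence of locality: the ribbon twist acts on the summand $\la\boxtimes\bar\mu$ of $\theta$ by $\omega_\la\omega_\mu^{-1}$ (the opposite category inverts the twist), and locality $x=\e(\th,\th)x$ forces $\theta$ to carry trivial twist; since $\Theta$ is irreducible every summand is connected to the unit, so $\omega_\la=\omega_\mu$ whenever $Z_{\la\mu}\neq0$, which is exactly $T_{\C_1}Z=ZT_{\C_2}$. The $S$-relation is the substantive point. Here I would pass to the modular tensor category $\B^{\loc}_{\Theta}$ of local $\Theta$-modules (Kirillov--Ostrik, M\"uger), whose global dimension satisfies $\dim\B^{\loc}_{\Theta}=\dim\B/(\dim\theta)^2$; the Lagrangian hypothesis gives $\dim\B^{\loc}_{\Theta}=1$, hence $\B^{\loc}_{\Theta}\isom\Vec$, i.e. the induction functor $\beta\mapsto\theta\otimes\beta$ has the unit as its only simple local image. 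I would then invoke M\"uger's formula relating $S_{\C_1}\otimes\overline{S_{\C_2}}$ to the $S$-matrix of $\B^{\loc}_{\Theta}$ through the branching coefficients of this induction: when $\B^{\loc}_{\Theta}$ is trivial those coefficients are precisely the entries $Z_{\la\mu}$, and the formula degenerates to $S_{\C_1}Z=ZS_{\C_2}$. In subfactor language this is the $\a$-induction computation of \cite{BEK1}: realizing $\Theta$ as a subfactor one has $Z_{\la\mu}=\langle\a^+_\la,\a^-_\mu\rangle$, and maximality (Lagrangianness) makes the $\a$-induced modular invariant of $\B$ unpack, via the product form of the modular data, into the desired intertwiner.

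I expect this last step --- extracting $S_{\C_1}Z=ZS_{\C_2}$ from $\B^{\loc}_{\Theta}\isom\Vec$ --- to be the main obstacle, precisely because it is where the Lagrangian condition, rather than mere locality, is used, and where the failure of $Z$ to commute with $S$ for non-maximal local $Q$-systems must be overcome; the remaining steps are either formal or reduce to the short computation above.
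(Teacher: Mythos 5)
First, a framing remark: the paper does not prove Proposition \ref{modular} at all --- it is quoted as a known result of M\"uger \cite{M} (see also \cite[Proposition 5.2]{BKL}, \cite[Proposition 3.7, Corollary 3.8]{DNO} and an unpublished manuscript), so your attempt can only be measured against those sources; your outline does follow their standard route. The direction ``modular invariance $\Rightarrow$ Lagrangian'' is complete and correct as you wrote it: $\dim\th=\sqrt{\dim\B}\,(S_{\C_1}ZS_{\C_2})_{00}=\sqrt{\dim\B}\,(ZC)_{00}=\sqrt{\dim\B}\,Z_{00}$ with $\B=\C_1\boxtimes\C_2^\opp$, and irreducibility gives $Z_{00}=1$. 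The $T$-half of the converse is also a standard fact, although ``locality forces trivial twist'' deserves a line of argument: for an irreducible $\sigma\prec\th$ one compares $w^*x^*(e_\sigma\otimes e_{\bar\sigma})$ with $w^*x^*(e_{\bar\sigma}\otimes e_\sigma)\circ\e(\sigma,\bar\sigma)$; this alone only yields $\omega_\sigma^2=1$, and one needs positivity (standardness of the $C^*$-Frobenius structure) to exclude $\omega_\sigma=-1$. Note also that this part uses only locality, not the Lagrangian hypothesis.

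The genuine gap is exactly where you flagged it: the implication ``Lagrangian $\Rightarrow S_{\C_1}Z=ZS_{\C_2}$'' is not proved but deferred to ``M\"uger's formula,'' and that formula \emph{is} the hard direction; as stated, your argument establishes one implication and half of the other. To close it along the lines you indicate: realize $\Theta$ as a subfactor $N\subset M$ with $\B$ acting on $N$; $\a$-induction yields a modular invariant $\tilde Z$ for $\B$ commuting with $S_\B=S_{\C_1}\otimes\overline{S_{\C_2}}$ by \cite{BEK1}; the Lagrangian hypothesis makes the ambichiral system trivial (its global dimension is $\dim\B/(\dim\th)^2=1$, cf.\ \cite{BEK3}), so $\tilde Z$ factorizes through the branching coefficients as $\tilde Z_{(\la\mu),(\la'\mu')}=Z_{\la\mu}Z_{\la'\mu'}$ as in \cite{BEK2}. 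Evaluating $S_\B\tilde Z=\tilde Z S_\B$ on the vacuum column, using $\overline{(S_{\C_2})_{\mu\mu'}}=(S_{\C_2}^{-1})_{\mu\mu'}$ and $\sum_{\la'\mu'}Z_{\la'\mu'}(S_{\C_1})_{0\la'}(S_{\C_2})_{0\mu'}=\dim\th/\sqrt{\dim\B}=1$, gives $S_{\C_1}ZS_{\C_2}^{-1}=Z$, which is the desired relation. Some such computation must be written out for the proof to stand on its own rather than re-citing the very result being proved.
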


This was first raised as a problem in
\cite[Section 3]{R2} in the context of full conformal field theory,
and proved by M\"uger \cite{M} and an unpublished manuscript of Longo
and the author.  This is valid in a general context of a modular
tensor category.  (Also see \cite[Proposition 5.2]{BKL}.)

Let $(\theta, w, x)$ be a $Q$-system where
$\theta$ is of the form $\bigoplus_{\la\in{\Irr}(\C_1),
\mu\in{\Irr}(\C_1),\nu\in{\Irr}(\C_2)}
Z_{\la\mu\nu}\la\boxtimes\mu\boxtimes\nu$ for some modular
tensor categories $\C_1,\C_2$.
By applying the functor $T$ to the $\C_1$ component
as in \cite[Section 4.1]{KR},
\cite[Section 4.2]{BKL}, we obtain a new $Q$-system
$T(\Theta)=(T(\th),w_T,x_T)$ where $T(\th)=
\bigoplus_{\la\in{\Irr}(\C_1),\mu\in{\Irr}(\C_1),
\nu\in{\Irr}(\C_2)}
Z_{\la\mu\nu}\la\mu\boxtimes\nu$.  (We need the
braiding structure of $\C_1$ in order to define
$x_T$.)  Note that
even if $\Theta$ is irreducible, $T(\Theta)$ is
not irreducible in general.

Let $(\theta, w, x)$ be another $Q$-system where
$\theta$ is of the form $\bigoplus_{\la\in{\Irr}(\C_1),
\mu\in{\Irr}(\C_2)}
Z_{\la\mu}\la\boxtimes\mu$ for some modular
tensor categories $\C_1,\C_2$.  By applying
\cite[Corollary 3.10]{ILP}, we have a new
$Q$-system $(\th_1,w_1,s_1)$ with
$\th=\bigoplus_{\la\in{\Irr}(\C_1)}Z_{\la0}\la$
where $0$ denotes the identity object of $\C_2$.
We call it the restriction of $\Theta$ to $\C_1$.

Now let  $\Theta_1=(\theta_1, w_1, x_1)$ 
and $\Theta_2=(\theta_2, w_2, x_2)$ be $Q$-systems where
$$\theta_1=\bigoplus_{\la\in{\Irr}(\C_1),\mu\in{\Irr}(\C_2^\opp)}
Z^1_{\la\mu}\la\boxtimes\bar\mu$$ on $\C_1\boxtimes\C_2^\opp$ and
$$\theta_2=\bigoplus_{\mu\in{\Irr}(\C_2),\nu\in{\Irr}(\C_3^\opp)}
Z^2_{\mu\nu}\mu\boxtimes\bar\nu$$ on $\C_2\boxtimes\C_3^\opp$
for some modular tensor categories $\C_1,\C_2,\C_3$.

Let $\Theta_1\boxtimes\Theta_2$ be the tensor product of
the two $Q$-systems for which the object is given by
$$\bigoplus_{\la\in{\Irr}(\C_1),\mu\in{\Irr}(\C_2^\opp),
\mu'\in{\Irr}(\C_2),\nu\in{\Irr}(\C_3^\opp)}
Z^1_{\la\mu}Z^2_{\mu'\nu}
\la\boxtimes\bar\mu\boxtimes\mu'\boxtimes\bar\nu.$$
By applying the $T$ functor to the $\C_2$ components,
we obtain a new $Q$-system whose object is
$$\bigoplus_{\la\in{\Irr}(\C_1),\mu\in{\Irr}(\C_2^\opp),
\mu'\in{\Irr}(\C_2),\nu\in{\Irr}(\C_3^\opp)}
Z^1_{\la\mu}Z^2_{\mu'\nu}
\la\boxtimes\bar\mu\mu'\boxtimes\bar\nu.$$
By restricting this $Q$-system to $\C_1\boxtimes\C_3^\opp$, we
obtain a new $Q$-system whose object is 
$$\bigoplus_{\la\in{\Irr}(\C_1),\mu\in{\Irr}(\C_2),
\nu\in{\Irr}(\C_3^\opp)}
Z^1_{\la\mu}Z^2_{\mu\nu}\la\boxtimes\bar\nu.$$

\begin{definition}
We call the above $Q$-system the relative tensor product of
$\Theta_1$ and $\Theta_2$ over $\C_2$ and write
$\Theta_1\otimes_{\C_2}\Theta_2$.
\end{definition}

From the definition, it is easy to see the following.

\begin{proposition}
The relative tensor product operation is associative.
\end{proposition}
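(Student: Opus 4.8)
The plan is to reduce both iterated products to a single symmetric construction. Let $\Theta_3=(\theta_3,w_3,x_3)$ be a third $Q$-system on $\C_3\boxtimes\C_4^\opp$ with object $\theta_3=\bigoplus_{\nu\in\Irr(\C_3),\rho\in\Irr(\C_4^\opp)}Z^3_{\nu\rho}\nu\boxtimes\bar\rho$. Starting from the triple tensor product $\Theta_1\boxtimes\Theta_2\boxtimes\Theta_3$, which lives on
$$\C_1\boxtimes\C_2^\opp\boxtimes\C_2\boxtimes\C_3^\opp\boxtimes\C_3\boxtimes\C_4^\opp,$$
I would apply the $T$ functor to the two adjacent $\C_2$ components and to the two adjacent $\C_3$ components, and then restrict to $\C_1\boxtimes\C_4^\opp$. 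Call the resulting $Q$-system $\Theta_{123}$; since the restriction forces the diagonal on each contracted label, its object is $\bigoplus_{\la,\rho}(Z^1Z^2Z^3)_{\la\rho}\la\boxtimes\bar\rho$. The goal is then to show that both $(\Theta_1\otimes_{\C_2}\Theta_2)\otimes_{\C_3}\Theta_3$ and $\Theta_1\otimes_{\C_2}(\Theta_2\otimes_{\C_3}\Theta_3)$ coincide with $\Theta_{123}$.

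First I would record the three facts on which everything rests. The tensor product $\boxtimes$ of $Q$-systems is associative, so the order in which $\Theta_1,\Theta_2,\Theta_3$ are multiplied before any contraction is immaterial. The $T$ functor and the restriction of \cite[Corollary 3.10]{ILP} are each applied to a single Deligne tensor factor and act as the identity on the remaining factors; moreover the braiding used to define $x_T$ for the $\C_2$ contraction lives in $\C_2$, whereas the one for the $\C_3$ contraction lives in $\C_3$. Consequently the $\C_2$ contraction (the $T$ functor on the $\C_2$ components followed by the restriction of the $\C_2$ label) commutes with the $\C_3$ contraction, both on objects and on the structure morphisms $w$ and $x$.

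Granting this, the left-bracketed product is obtained from $\Theta_{123}$ by performing the $\C_2$ contraction first and the $\C_3$ contraction second: since forming $\boxtimes\,\Theta_3$ touches only the factors $\C_3\boxtimes\C_4^\opp$ left untouched by the $\C_2$ contraction, the multiplication by $\Theta_3$ can be commuted back before that contraction, leaving exactly the two contractions applied to the triple product. The right-bracketed product performs the two contractions in the opposite order. Because they commute, both equal $\Theta_{123}$; in particular the underlying objects agree via $(Z^1Z^2)Z^3=Z^1(Z^2Z^3)$, and the resulting isomorphism of $Q$-systems intertwines the units and the multiplications.

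The main obstacle is precisely this commutation claim at the level of the full $Q$-system rather than of the object. On objects it is immediate from the associativity of matrix multiplication, but one must check that the multiplication $x$ produced by the $T$ functor, which is assembled from the braiding and the Frobenius structure, is unchanged when the $\C_3$ contraction is carried out before the $\C_2$ contraction. This is a naturality statement: the $T$ functor and the ILP restriction are natural with respect to morphisms supported on the untouched factors, so the two families of structure morphisms, being built from the independent braidings of $\C_2$ and $\C_3$, satisfy the interchange law. Conceptually this is just the associativity of the relative tensor product of the $\C_i$-bimodules represented by the $Q$-systems, and the only genuine computation is to verify the interchange of the two braidings, which reduces to the naturality and hexagon axioms in $\C_2$ and $\C_3$ separately.
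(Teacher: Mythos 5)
Your argument is correct, and it is in fact more detailed than the paper's, which offers no proof beyond the remark ``From the definition, it is easy to see the following.'' Your reduction of both bracketings to a single simultaneous contraction of $\Theta_1\boxtimes\Theta_2\boxtimes\Theta_3$ --- justified by the observation that the $T$ functor and the restriction for the $\C_2$ components act only on those Deligne factors and hence commute with the corresponding operations on the $\C_3$ components, at the level of $w$ and $x$ as well as of objects --- is precisely the content the author leaves implicit.
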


To apply this notion to a full conformal field theory,
we need the following.

\begin{proposition}
If two $Q$-systems are both local, then the relative tensor
product $\Theta_1\otimes_{\C_2}\Theta_2$ is also local.
\end{proposition}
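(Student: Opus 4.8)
The plan is to verify the defining locality equation $x=\e(\theta,\theta)\,x$ directly for the $Q$-system $\Theta_1\otimes_{\C_2}\Theta_2$, tracking how it is inherited from the locality of $\Theta_1$ and $\Theta_2$ through the three operations (external tensor product, the $T$ functor, and restriction) used to build it.

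First I would treat the external tensor product $\Theta_1\boxtimes\Theta_2$ on $\C_1\boxtimes\C_2^\opp\boxtimes\C_2\boxtimes\C_3^\opp$. Its multiplication is $x_1\boxtimes x_2$ and the braiding of a Deligne product is the external product of the braidings of the factors, so $\e(\theta_1\boxtimes\theta_2,\theta_1\boxtimes\theta_2)\,(x_1\boxtimes x_2)=(\e(\theta_1,\theta_1)\,x_1)\boxtimes(\e(\theta_2,\theta_2)\,x_2)=x_1\boxtimes x_2$ by the assumed locality of $\Theta_1$ and $\Theta_2$. Thus $\Theta_1\boxtimes\Theta_2$ is local with respect to the full product braiding on all four legs; this step is routine.

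The key observation is that the final $Q$-system lives on $\C_1\boxtimes\C_3^\opp$, so only the locality equation against $\e_{\C_1}$ and $\e_{\C_3^\opp}$ must be checked, and these two legs are precisely the ones left untouched by the subsequent operations: the $T$ functor of \cite[Section 4.1]{KR}, \cite[Section 4.2]{BKL} acts only on the two $\C_2$-type legs $\bar\mu$ and $\mu'$, merging them into $\bar\mu\mu'$, and the restriction of \cite[Corollary 3.10]{ILP} collapses this merged $\C_2$-leg onto the identity object of $\C_2$. I would therefore argue that the $\C_1$-part of the locality equation descends from the locality of $\Theta_1$ and the $\C_3$-part from that of $\Theta_2$, since neither the $T$ merge nor the restriction alters the braiding carried by the $\C_1$- and $\C_3^\opp$-legs.

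The hard part will be the braiding bookkeeping across the merge and the restriction. Concretely, $x_T$ is defined by inserting the braiding of $\C_2$ to bring the $\C_2^\opp$-leg and the $\C_2$-leg together before multiplying, and I must check that these inserted $\C_2$-braidings commute past the $\C_1$- and $\C_3^\opp$-legs, by naturality of the product braiding, without disturbing the locality equation in those directions. Finally, on the identity isotypic component selected by the restriction the merged $\C_2$-leg is the unit object, whose braiding is trivial, so no cross-terms between the $\C_2$-leg and the $\C_1,\C_3$-legs survive, and the locality equation for $\Theta_1\otimes_{\C_2}\Theta_2$ follows. Unwinding the definition of $x_T$ carefully enough to justify these commutations, and confirming the vanishing of the cross-terms on the identity component, is where the real work lies.
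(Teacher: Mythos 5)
Your overall strategy coincides with the paper's: verify locality for the external product $\Theta_1\boxtimes\Theta_2$ (which is immediate, exactly as you say), and then track the locality equation through the $T$ functor on the two $\C_2$-type legs and the restriction to the identity component. However, the step you explicitly defer --- ``unwinding the definition of $x_T$ carefully enough to justify these commutations, and confirming the vanishing of the cross-terms on the identity component'' --- is not a routine verification to be postponed; it is the entire content of the proof, and your sketch of why it should work is too optimistic. In particular, the assertion that the $\C_1$-part of locality ``descends from the locality of $\Theta_1$'' and the $\C_3$-part from $\Theta_2$ glosses over the fact that the multiplication of the restricted $Q$-system on a given $\la\boxtimes\bar\nu$ sector is a sum over $\mu$ of compositions involving the duality morphisms that contract the merged $\C_2$-leg onto the unit, and the braidings inserted in $x_T$ do not simply disappear on the identity isotypic component --- they survive as crossings between the contracted $\mu$-wires and must be shown to cancel.

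The paper resolves exactly this point graphically: starting from the wire diagram expressing locality of $\Theta_1\boxtimes\Theta_2$ (Fig.~\ref{local1}), one closes the $\C_2^\opp$-wires onto the corresponding $\C_2$-wires with a \emph{specific} choice of over/under convention (the $\bar\mu$--$\mu_1$ connections pass over the $\bar\mu'$--$\mu'_1$ connections, Fig.~\ref{local2}), and then a Reidemeister move of type II cancels the resulting double crossings against the reversed braiding coming from the $\C_2^\opp$ factor, yielding the locality identity for $\Theta_1\otimes_{\C_2}\Theta_2$ (Fig.~\ref{local3}). The choice of which strand goes over which is not arbitrary --- the opposite convention would not cancel --- so your argument needs either this graphical computation or its algebraic equivalent (naturality of the braiding against the cups and caps, together with the hexagon identities) to be complete. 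As it stands, your proposal is a correct plan with the decisive step missing.
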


\begin{proof}
For notational simplicity, we may treat $\C_1\boxtimes\C_3^\opp$
as a single modular tensor category, so we simply write $\C_1$
for $\C_1\boxtimes\C_3^\opp$ as if
$\C_3$ were the trivial modular tensor category
$\Vec$ of finite dimensional Hilbert spaces.

Locality of the tensor product $Q$-system $\Theta_1\boxtimes\Theta_2$
is represented as in Fig. \ref{local1}.  
(We follow the graphical convention of \cite[Section 3]{BEK1}, but  
compose morphisms from the bottom to the top, which is a
converse direction to the one in \cite[Section 3]{BEK1}.)
In this picture, the triple points
on the left hand side denote $x_1,x_2,x_2$, respectively.  The second
braiding on the right hand side is reversed because we have $\C_2^\opp$
for this component.

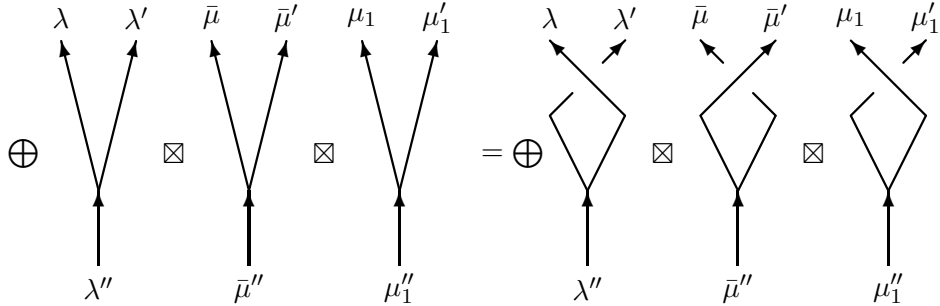
\begin{figure}[htb]
\begin{center}
\unitlength 1mm
\begin{picture}(140,50)
\thicklines
\multiput(20,10)(20,0){3}{\vector(0,1){10}}
\multiput(85,10)(20,0){3}{\vector(0,1){10}}
\multiput(20,20)(20,0){3}{\vector(-1,4){5}}
\multiput(20,20)(20,0){3}{\vector(1,4){5}}
\multiput(85,20)(20,0){3}{\line(-1,2){5}}
\multiput(85,20)(20,0){3}{\line(1,2){5}}
\multiput(90,30)(40,0){2}{\vector(-1,1){10}}
\put(100,30){\vector(1,1){10}}
\multiput(80,30)(40,0){2}{\line(1,1){3}}
\multiput(87,37)(40,0){2}{\vector(1,1){3}}
\put(110,30){\line(-1,1){3}}
\put(103,37){\vector(-1,1){3}}
\put(10,25){\makebox(0,0){$\bigoplus$}}
\put(75,25){\makebox(0,0){$=\bigoplus$}}
\put(30,25){\makebox(0,0){$\boxtimes$}}
\put(50,25){\makebox(0,0){$\boxtimes$}}
\put(95,25){\makebox(0,0){$\boxtimes$}}
\put(115,25){\makebox(0,0){$\boxtimes$}}
\put(20,7){\makebox(0,0){$\lambda''$}}
\put(15,43){\makebox(0,0){$\lambda$}}
\put(25,43){\makebox(0,0){$\lambda'$}}
\put(40,7){\makebox(0,0){$\bar\mu''$}}
\put(35,43){\makebox(0,0){$\bar\mu$}}
\put(45,43){\makebox(0,0){$\bar\mu'$}}
\put(60,7){\makebox(0,0){$\mu''_1$}}
\put(55,43){\makebox(0,0){$\mu_1$}}
\put(65,43){\makebox(0,0){$\mu'_1$}}
\put(85,7){\makebox(0,0){$\lambda''$}}
\put(80,43){\makebox(0,0){$\lambda$}}
\put(90,43){\makebox(0,0){$\lambda'$}}
\put(105,7){\makebox(0,0){$\bar\mu''$}}
\put(100,43){\makebox(0,0){$\bar\mu$}}
\put(110,43){\makebox(0,0){$\bar\mu'$}}
\put(125,7){\makebox(0,0){$\mu''_1$}}
\put(120,43){\makebox(0,0){$\mu_1$}}
\put(130,43){\makebox(0,0){$\mu'_1$}}
\end{picture}
\end{center}
\caption{Locality (1)}
\label{local1}
\end{figure}

From Fig. \ref{local1}, we connect the wires $\bar\mu''$ and
$\mu''_1$, the wires $\bar\mu$ and $\mu_1$, and the
wires $\bar\mu'$ and $\mu'_1$ on the both hand sides
so that the wires connecting
$\bar\mu$ and $\mu_1$ go over the ones connecting
$\bar\mu'$ and $\mu'_1$.  Then we obtain Fig. \ref{local2}.
Then the Reidemeister move II on the most right picture of
Fig. \ref{local2} produces Fig. \ref{local3}.

\begin{figure}[htb]
\begin{center}
\unitlength 1mm
\begin{picture}(135,50)
\thicklines
\put(20,10){\vector(0,1){10}}
\put(75,10){\vector(0,1){10}}
\put(20,20){\vector(-1,4){5}}
\put(20,20){\vector(1,4){5}}
\put(75,20){\line(-1,2){5}}
\put(75,20){\line(1,2){5}}
\put(80,30){\vector(-1,1){10}}
\put(70,30){\line(1,1){3}}
\put(77,37){\vector(1,1){3}}
\put(10,25){\makebox(0,0){$\bigoplus$}}
\put(65,25){\makebox(0,0){$=\bigoplus$}}
\put(30,25){\makebox(0,0){$\boxtimes$}}
\put(85,25){\makebox(0,0){$\boxtimes$}}
\put(35,30){\vector(1,-1){10}}
\put(50,25){\vector(-1,1){10}}
\put(55,30){\vector(-1,1){5}}
\put(35,30){\line(1,1){5}}
\put(40,25){\line(1,1){3}}
\put(47,32){\line(1,1){3}}
\put(45,20){\line(1,1){10}}
\put(90,30){\line(1,1){15}}
\put(90,20){\line(1,1){18}}
\put(112,42){\line(1,1){3}}
\put(95,15){\line(1,1){5}}
\put(110,20){\line(1,1){8}}
\put(122,32){\line(1,1){3}}
\put(105,5){\line(1,1){20}}
\put(90,20){\vector(1,-1){15}}
\put(125,25){\vector(-1,1){20}}
\put(125,35){\vector(-1,1){10}}
\put(110,20){\line(1,-1){5}}
\put(90,30){\line(1,-1){3}}
\put(100,20){\line(-1,1){3}}
\put(20,7){\makebox(0,0){$\lambda''$}}
\put(15,43){\makebox(0,0){$\lambda$}}
\put(25,43){\makebox(0,0){$\lambda'$}}
\put(45,17){\makebox(0,0){$\mu''$}}
\put(40,38){\makebox(0,0){$\mu$}}
\put(50,38){\makebox(0,0){$\mu'$}}
\put(75,7){\makebox(0,0){$\lambda''$}}
\put(70,43){\makebox(0,0){$\lambda$}}
\put(80,43){\makebox(0,0){$\lambda'$}}
\put(105,2){\makebox(0,0){$\mu''$}}
\put(115,48){\makebox(0,0){$\mu$}}
\put(105,48){\makebox(0,0){$\mu'$}}
\end{picture}
\end{center}
\caption{Locality (2)}
\label{local2}
\end{figure}
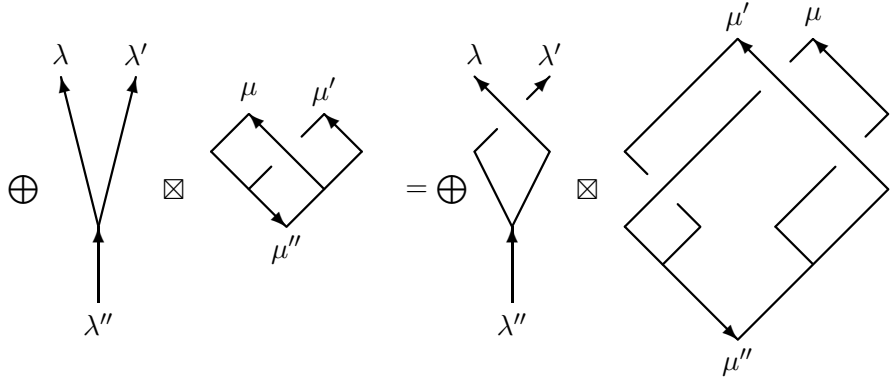

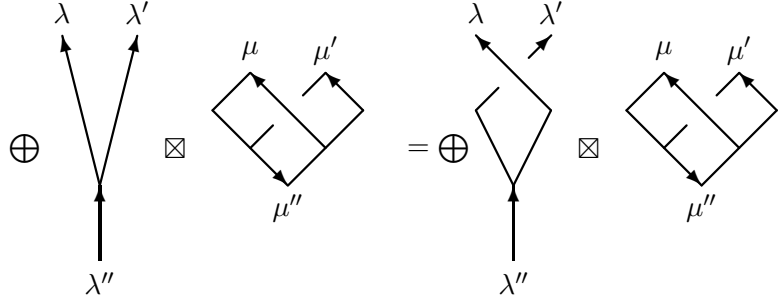
\begin{figure}[htb]
\begin{center}
\unitlength 1mm
\begin{picture}(120,50)
\thicklines
\put(20,10){\vector(0,1){10}}
\put(75,10){\vector(0,1){10}}
\put(20,20){\vector(-1,4){5}}
\put(20,20){\vector(1,4){5}}
\put(75,20){\line(-1,2){5}}
\put(75,20){\line(1,2){5}}
\put(80,30){\vector(-1,1){10}}
\put(70,30){\line(1,1){3}}
\put(77,37){\vector(1,1){3}}
\put(10,25){\makebox(0,0){$\bigoplus$}}
\put(65,25){\makebox(0,0){$=\bigoplus$}}
\put(30,25){\makebox(0,0){$\boxtimes$}}
\put(85,25){\makebox(0,0){$\boxtimes$}}
\put(35,30){\vector(1,-1){10}}
\put(50,25){\vector(-1,1){10}}
\put(55,30){\vector(-1,1){5}}
\put(35,30){\line(1,1){5}}
\put(40,25){\line(1,1){3}}
\put(47,32){\line(1,1){3}}
\put(45,20){\line(1,1){10}}
\put(90,30){\vector(1,-1){10}}
\put(105,25){\vector(-1,1){10}}
\put(110,30){\vector(-1,1){5}}
\put(90,30){\line(1,1){5}}
\put(95,25){\line(1,1){3}}
\put(102,32){\line(1,1){3}}
\put(100,20){\line(1,1){10}}
\put(20,7){\makebox(0,0){$\lambda''$}}
\put(15,43){\makebox(0,0){$\lambda$}}
\put(25,43){\makebox(0,0){$\lambda'$}}
\put(45,17){\makebox(0,0){$\mu''$}}
\put(40,38){\makebox(0,0){$\mu$}}
\put(50,38){\makebox(0,0){$\mu'$}}
\put(75,7){\makebox(0,0){$\lambda''$}}
\put(70,43){\makebox(0,0){$\lambda$}}
\put(80,43){\makebox(0,0){$\lambda'$}}
\put(100,17){\makebox(0,0){$\mu''$}}
\put(95,38){\makebox(0,0){$\mu$}}
\put(105,38){\makebox(0,0){$\mu'$}}
\end{picture}
\end{center}
\caption{Locality (3)}
\label{local3}
\end{figure}

Fig. \ref{local3} represents the locality of
$\Theta_1\otimes_{\C_2}\Theta_2$.
\end{proof}

We consider the irreducible decomposition 
$\Theta_1\otimes_{\C_2}\Theta_2=\bigoplus_i \Theta_3^i$,
which is a finite sum.
By \cite[Corollary 3.6]{BE1}, this coincides with the corresponding
factorial decomposition  $M=\bigoplus_i M_i$ where
the $Q$-system $\Theta_1\otimes_{\C_2}\Theta_2$ corresponds to
an inclusion $N\subset M$ and the one $\Theta_3^i$ corresponds
to $N\subset M_i$.

We first list the following lemma.   See \cite[Definition 5.1]{DMNO}
for the definition of Witt equivalence.

\begin{lemma}
\label{ext}
Let $\C_1,\C_2$ be Witt equivalent modular tensor categories
and $\Theta=(\theta, w, x)$ be an irreducible local
$Q$-system where $\theta$ is of the form
$\bigoplus_{\la\in{\Irr}(\C_1),
\mu\in{\Irr}(\C_2^\opp)}
Z_{\la\mu}\la\boxtimes\bar\mu$.
Then there exists an irreducible local Lagrangian 
$Q$-system $\tilde\Theta=(\tilde\theta, \tilde w, \tilde x)$
where $\tilde \theta$ is of the form
$\bigoplus_{\la\in{\Irr}(\C_1),
\mu\in{\Irr}(\C_2^\opp)}
\tilde Z_{\la\mu}\la\boxtimes\bar\mu$ with
$\tilde Z_{\la\mu}\ge Z_{\la\mu}$ for all 
$\la\in{\Irr}(\C_1),
\mu\in{\Irr}(\C_2^\opp)$ and $\tilde Z_{00}=Z_{00}=1$
where $0$ denotes the identity objects of 
$\C_1$ and $\C_2$.
\end{lemma}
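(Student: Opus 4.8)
The plan is to translate the statement into the language of connected étale algebras and to exploit the Witt equivalence hypothesis through the category of local modules. In the unitary setting an irreducible local $Q$-system is the same datum as a connected commutative (i.e.\ local, dyslectic) étale algebra: locality $x=\e(\th,\th)x$ is commutativity, and irreducibility $N'\cap M=\mathbb{C}$ is connectedness, $\dim\Hom(0\boxtimes\bar0,\th)=1$, so that $Z_{00}=1$ is built into the hypotheses. Write $\D:=\C_1\boxtimes\C_2^\opp$, a modular (nondegenerate braided fusion) category, and let $A$ be the connected commutative étale algebra underlying $\Theta$. Since $\C_1$ and $\C_2$ are Witt equivalent and $\C_2^\opp$ represents the inverse Witt class of $\C_2$, the class $[\D]=[\C_1]-[\C_2]$ vanishes in the Witt group, i.e.\ $\D$ lies in the trivial Witt class.

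First I would pass to the modular category $\D_A^\loc$ of local $A$-modules. By the theory of étale algebras in nondegenerate braided fusion categories (see \cite{DMNO} and the references therein), $\D_A^\loc$ is again modular and has the same Witt class as $\D$, so $[\D_A^\loc]=0$. A nondegenerate braided fusion category in the trivial Witt class contains a Lagrangian algebra (equivalently, is a Drinfeld center), so I would choose a connected commutative Lagrangian algebra $B$ in $\D_A^\loc$; thus $(\dim_{\D_A^\loc}B)^2=\dim\D_A^\loc=\dim\D/(\dim A)^2$.

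Next I would lift $B$ back to $\D$. There is a standard correspondence (Kirillov--Ostrik; see again \cite{DMNO}) between connected étale algebras in $\D_A^\loc$ and connected étale algebras $\tilde A$ in $\D$ equipped with an algebra map $A\to\tilde A$, under which commutativity/locality on the two sides correspond; concretely $\tilde A$ is the image $F(B)$ of $B$ under the forgetful functor $F\colon\D_A^\loc\to\D$, with multiplication induced from that of $B$. I would then declare $\tilde\Theta$ to be the irreducible local $Q$-system carried by $\tilde A:=F(B)$, whose object decomposes as $\tilde\theta=\bigoplus_{\la,\mu}\tilde Z_{\la\mu}\,\la\boxtimes\bar\mu$ because $\tilde A$ lives in $\D=\C_1\boxtimes\C_2^\opp$.

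It remains to check the three asserted properties, and this is where the care lies. Connectedness of $B$ means $\dim\Hom_{\D_A^\loc}(\mathbf 1,B)=1$, which by Frobenius reciprocity equals $\dim\Hom_\D(0\boxtimes\bar0,F(B))=\tilde Z_{00}$, giving $\tilde Z_{00}=1$; the same connectedness makes the unit $A\to\tilde A$ a (split) monomorphism, so that, as $\D$ is semisimple, $A$ is a direct summand of $\tilde A$ and hence $\tilde Z_{\la\mu}\ge Z_{\la\mu}$ for all $\la,\mu$. For the Lagrangian property I would run the dimension bookkeeping: the forgetful functor satisfies $\dim F(M)=\dim A\cdot\dim_{\D_A^\loc}M$, so $\dim\tilde\theta=\dim F(B)=\dim A\cdot\dim_{\D_A^\loc}B=\sqrt{\dim\D}$, whence $(\dim\tilde\theta)^2=\dim\D$ and $\tilde\Theta$ is Lagrangian (equivalently, by Proposition \ref{modular}, $\tilde Z$ is a modular invariant). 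The main obstacle is precisely this compatibility step, namely verifying that the lift along $F$ transports a \emph{local} Lagrangian algebra of $\D_A^\loc$ to a \emph{local} Lagrangian algebra of $\D$ containing $A$: one must confirm that commutativity/locality is preserved under the correspondence and that the two dimension identities $\dim F(B)=\dim A\cdot\dim_{\D_A^\loc}B$ and $\dim\D_A^\loc=\dim\D/(\dim A)^2$ combine to yield exactly the Lagrangian condition. Everything else reduces to the cited structural facts about étale algebras and the Witt group.
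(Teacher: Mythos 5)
Your proof is correct and is essentially the paper's own argument translated into the \'etale-algebra language of \cite{DMNO}: the category $\D_A^\loc$ of local $A$-modules is exactly the ambichiral category $\tilde\C$ the paper forms via \cite{BEK2}, the existence of a Lagrangian algebra in the Witt-trivial category $\D_A^\loc$ is the paper's appeal to \cite{K2}, and your lift $F(B)$ along the forgetful functor is the paper's ``composition'' of the two $Q$-systems. You in fact spell out the connectedness, containment, and dimension bookkeeping in more detail than the paper does.
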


\begin{proof}
Let $\tilde \C$ be the modular tensor category
arising as the ambichiral category from
the $Q$-system $\Theta$ as in \cite[Theorem 4.2]{BEK2}.
(Note that the ambichiral objects correspond to
dyslectic/local modules in the terminology of \cite{DMNO},
\cite{DNO}.)
By \cite[Corollary 4.8]{BEK3}, $\C_1\boxtimes\C_2^\opp$
is Witt equivalent to $\tilde\C$, which means that
$\tilde\C$ is Witt equivalent to the trivial modular
tensor category $\Vec$.  By \cite[Theorem 2.4]{K2},
we have an irreducible local Lagrangian $Q$-system on $\tilde\C$.
Composing this with the original $Q$-system $\Theta$,
we have an irreducible local Lagrangian $Q$-system
$\tilde\Theta$ on $\C_1\boxtimes\C_2^\opp$.
It has the modular invariance property by Proposition
\ref{modular}, and $\tilde Z_{\la\mu}\ge Z_{\la\mu}$
and $\tilde Z_{00}=Z_{00}=1$ are clear.
\end{proof}

\begin{theorem}
If the $Q$-systems $\Theta_1$ and $\Theta_2$ are both Lagrangian,
so is each $\Theta_3^i$.
\end{theorem}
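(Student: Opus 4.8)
The plan is to reduce the statement to a dimension count, with Lemma \ref{ext} as the crucial input. First I would unwind the Lagrangian hypotheses. By Proposition \ref{modular} the matrices $Z^1$ and $Z^2$ are modular invariants, so $S_{\C_1}Z^1=Z^1S_{\C_2}$ and $S_{\C_2}Z^2=Z^2S_{\C_3}$; multiplying gives $S_{\C_1}Z=ZS_{\C_3}$ for $Z:=Z^1Z^2$, the matrix of $\Theta_1\otimes_{\C_2}\Theta_2$. I would also note that the existence of Lagrangian $Q$-systems on $\C_1\boxtimes\C_2^\opp$ and $\C_2\boxtimes\C_3^\opp$ forces both categories to be Witt trivial (compare the proof of Lemma \ref{ext}), so that $\C_1$, $\C_2$, $\C_3$ are mutually Witt equivalent; in particular $\C_1$ is Witt equivalent to $\C_3$, which is exactly what is needed to apply Lemma \ref{ext} on $\C_1\boxtimes\C_3^\opp$.

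Next I would compute the global dimension $\dim\th_{\mathrm{rel}}$ of the object of $\Theta_1\otimes_{\C_2}\Theta_2$. Writing $D_i=\sqrt{\dim\C_i}$, letting $d_i$ be the column vector $(\dim\la)_\la$ of quantum dimensions of $\C_i$ and $e_0$ the unit vector at the identity object, I would use the standard identities $d_i=D_iS_{\C_i}e_0$ and $S_{\C_i}^2=C_i$ with $C_ie_0=e_0$, together with the symmetry of $S$ and $S_{\C_1}Z=ZS_{\C_3}$, to obtain
\[
\dim\th_{\mathrm{rel}}=d_1^{\mathrm{t}}Zd_3=D_1D_3\,e_0^{\mathrm{t}}S_{\C_1}ZS_{\C_3}e_0=D_1D_3\,e_0^{\mathrm{t}}ZS_{\C_3}^2e_0=D_1D_3\,Z_{00}.
\]
Thus $\dim\th_{\mathrm{rel}}=D_1D_3\,(Z^1Z^2)_{00}$. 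On the other hand, in the decomposition $\Theta_1\otimes_{\C_2}\Theta_2=\bigoplus_i\Theta_3^i$ each summand is irreducible, so $Z^{3,i}_{00}=\dim\Hom(\id,\th_3^i)=1$; since $\sum_iZ^{3,i}=Z$, the number of summands is exactly $\sum_iZ^{3,i}_{00}=Z_{00}=:n$.

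The finishing move is an extremal argument. Each $\Theta_3^i$ is an irreducible local $Q$-system on $\C_1\boxtimes\C_3^\opp$ (locality being inherited from the already-established locality of $\Theta_1\otimes_{\C_2}\Theta_2$), so Lemma \ref{ext} produces a Lagrangian $\tilde\Theta_3^i$ with $\tilde Z^{3,i}\ge Z^{3,i}$ entrywise. Since a Lagrangian object on $\C_1\boxtimes\C_3^\opp$ has dimension $\sqrt{\dim(\C_1\boxtimes\C_3^\opp)}=D_1D_3$, and since quantum dimensions are positive,
\[
\dim\th_3^i=\sum_{\la,\nu}Z^{3,i}_{\la\nu}\dim\la\dim\nu\le\sum_{\la,\nu}\tilde Z^{3,i}_{\la\nu}\dim\la\dim\nu=\dim\tilde\th_3^i=D_1D_3,
\]
with equality exactly when $Z^{3,i}=\tilde Z^{3,i}$, i.e. exactly when $\Theta_3^i$ is Lagrangian. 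Comparing with $\sum_{i}\dim\th_3^i=\dim\th_{\mathrm{rel}}=nD_1D_3$ over the $n$ summands, each at most $D_1D_3$,
\[
nD_1D_3=\dim\th_{\mathrm{rel}}=\sum_{i}\dim\th_3^i\le nD_1D_3
\]
forces every $\dim\th_3^i=D_1D_3$, so every $\Theta_3^i$ is Lagrangian.

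The main obstacle is conceptual rather than computational: one cannot argue that each $Z^{3,i}$ is individually modular invariant, since modular invariance is not inherited by the summands of a sum; only the total $Z^1Z^2$ satisfies $S_{\C_1}Z=ZS_{\C_3}$. The device that circumvents this is Lemma \ref{ext}, which bounds each summand's dimension above by the Lagrangian value $D_1D_3$, the tightness of the bound being guaranteed by the numerical coincidence that the number of summands is precisely $(Z^1Z^2)_{00}$, the same factor appearing in $\dim\th_{\mathrm{rel}}=D_1D_3(Z^1Z^2)_{00}$. The two inputs I would verify with care are that each irreducible summand has $Z^{3,i}_{00}=1$ and that locality descends to the summands.
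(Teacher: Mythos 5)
Your proof is correct and follows essentially the same route as the paper: Witt equivalence of $\C_1$ and $\C_3$ feeds Lemma \ref{ext} to produce Lagrangian extensions $\tilde\Theta_3^i$ of the irreducible summands, and a positivity argument then forces the extensions to add nothing. Your global-dimension count $\dim\th_{\mathrm{rel}}=D_1D_3\,(Z^1Z^2)_{00}$ together with the bound $\dim\th_3^i\le D_1D_3$ is, via $\dim\la=\sqrt{\dim\C}\,S_{0\la}$, the same computation the paper performs on the $(0,0)$-entry of $S_{\C_1}\hat Z^3 S_{\C_3}$.
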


\begin{proof}
Set $Z^3_{\la\nu}=\sum_\mu Z^1_{\la\mu}Z^2_{\mu\nu}$ and
let $\bigoplus_{\la\in{\Irr}(\C_1),\nu\in{\Irr}(\C_3^\opp)}
Z^{3,i}_{\la\nu}\la\boxtimes\bar\nu$ 
be the object for $\Theta_3^i$.  By Proposition \ref{modular},
being Lagrangian for $\Theta_3^i$ is equivalent to
modular invariance property $S_{\C_1} Z^{3,i}=Z^{3,i} S_{\C_3}$
and $T_{\C_1} Z^{3,i}=Z^{3,i} T_{\C_3}$ for $Z^{3,i}$, where
$S_{\C_1}, S_{\C_3}, T_{\C_1}, T_{\C_3}$ are the 
$S$-matrix for $\C_1$, $S$-matrix for $\C_3$, 
$T$-matrix for $\C_1$ and
$T$-matrix for $\C_3$, respectively.

Note that $\C_1$ and $\C_2$ are Witt equivalent, and so are
$\C_2$ and $\C_3$.  Hence $\C_1$ and $\C_3$ are also Witt
equivalent and each $\Theta_3^i$ has a Lagrangian extension
$\tilde\Theta_3^i$ whose object is 
$\bigoplus_{\la\in{\Irr}(\C_1),\nu\in{\Irr}(\C_3^\opp)}
\tilde Z^{3,i}_{\la\nu}\la\boxtimes\bar\nu$ by Lemma \ref{ext}
and we have $S_{\C_1} \tilde Z^{3,i}=\tilde Z^{3,i} S_{\C_3}$
and $T_{\C_1} \tilde Z^{3,i}=\tilde Z^{3,i} T_{\C_3}$ by 
Proposition \ref{modular}.  By Lemma \ref{ext},
we may write $\tilde Z^{3,i}_{\la\nu}=Z^{3,i}_{\la\nu}
+\hat Z^{3,i}_{\la\nu}$, where each $\hat Z^{3,i}_{\la\nu}$
is a non-negative integer.

Since the matrix $\sum_i Z^{3,i}$ also has the modular invariance
property, the matrix $\hat Z^3=\sum_i \hat Z^{3,i}$ also has the modular
invariance property.  This implies
$\sum_{\la\nu} S_{\C_1,0\la} \hat Z^3_{\la\nu}
S_{\C_3,\nu0}=\hat Z^3_{00}$, but $Z^3_{00}=\sum_i Z^{3,i}_{00}=0$ and
$S_{\C_1,0\la}>0$, $S_{\C_3,\nu0}>0$.  We thus have $\hat Z^3_{\la\nu}=0$
for all $\la\in{\Irr}(\C_1)$ and $\nu\in{\Irr}(\C_3)$.  This proves the
modular invariance property $S_{\C_1} Z^{3,i}=Z^{3,i} S_{\C_3}$
and $T_{\C_1} Z^{3,i}=Z^{3,i} T_{\C_3}$ for $Z^{3,i}$, as desired.
\end{proof}

Note that the use of modular invariance in the last paragraph of the
above proof is the same as in \cite[p. 726 (5.2)]{G}.

This relative tensor product of $Q$-systems 
looks similar to that of bimodules, but the example of the
$A_{17}$-$D_{10}$-$E_7$ modular invariants mentioned in
the Introduction shows that their fusion rules do not
give a fusion category since the rigidity axiom is not
satisfied.

We have interpreted an irreducible local Lagrangian $Q$-system
on $\C_1\boxtimes\C_2$ as a gapped domain wall between topological
phases represented with $\C_1$ and $\C_2$ 
in  \cite[Definition 3.1]{K2}. 
(See \cite{HW1}, \cite{HW2}, \cite{LWW} for physical treatments
of gapped domain walls.)
From this viewpoint, the above
relative tensor product gives a mathematical definition
of the composition of gapped domain walls
mentioned in \cite[Fig. 1 (d)]{LWW}.
(Note that irreducibility of a $Q$-system is called
stability of a gapped domain wall in \cite{LWW}.)
A mathematical definition of such a composition has been 
studied in \cite{KZ}, \cite{AKZ}.  It would be interesting
to compare the above definition with theirs.

Another construction of fusion product with some
formal similarity has been defined in \cite{BDH}.
It would be interesting to find direct relations
to their construction.

\end{document}